
\documentclass[11pt]{amsart}   
\usepackage{latexsym}
\usepackage{amsfonts}
\usepackage{amscd}
\usepackage[all]{xypic}
\theoremstyle{plain}

\setcounter{page}{1}                   
\setlength{\textwidth}{4.4in}          
\setlength{\textheight}{7.0in}         
\setlength{\evensidemargin}{1in}       
\setlength{\oddsidemargin}{1in}        
\setlength{\topmargin}{.8in}           
\newtheorem{theor}{Theorem}[section]
\newtheorem{lem}[theor]{Lemma}
\newtheorem{co}[theor]{Corollary}

\theoremstyle{definition}

\newtheorem{example}[theor]{Example}
\theoremstyle{remark}
\newtheorem{re}[theor]{Remark}
\numberwithin{equation}{section}

\begin{document}
\noindent                                             
\begin{picture}(150,36)                               
\put(5,7){\textbf{Topology Proceedings}}              
\put(0,0){\framebox(140,34)}                          
\put(2,2){\framebox(136,30)}                          
\end{picture}                                         

\vspace{0.5in}

\title[Group homomorphisms induced by isometries]%
{Group homomorphisms induced by isometries of spaces of almost
periodic functions}

\author{Salvador Hern\'{a}ndez}
\address{Departamento de Matem\'aticas.\newline Universidad
Jaume I,\newline 12071-Castell\'on,\newline Spain.}
\email{hernande@mat.uji.es}
\thanks{Research partially supported by Spanish DGES, grant BFM2000-0913,
 and Generalitat Valenciana, grant CTIDIB/2002/192.}

\subjclass{22D35, 43A60, 54C40. Secondary 43A40}

\keywords{Locally compact group, almost periodic function, isometry}

\begin{abstract}
Let $G$ and $H$ be locally compact groups and consider their associate
spaces of almost periodic functions $AP(G)$ and $AP(H)$. We investigate the
continuous group homomorphisms induced by isometries of $AP(G)$ into
$AP(H)$. Among others, the following results are proved:

{\bf Theorem} Let $G$ and $H$ be $\sigma$-compact maximally almost
periodic locally compact groups. Suppose that $T$ is a
non-vanishing linear isometry of $AP(G)$ into $AP(H)$ that
respects finite dimensional unitary representations. Then there is
a closed subgroup $H_0\subseteq H$, a continuous group
homomorphism $t$ of $H_0$ onto $G$ and an character $\gamma\in
\widehat{H}$ such that $(Tf)(h)=\gamma (h)~f(t(h))$ for all $h\in
H_0$ and for all $f\in C(G)$.

{\bf Theorem} Let $G$ and $H$ be $LC$ Abelian groups and $H$ is
connected. Suppose that $T$ is a non-vanishing linear isometry of $AP(G)$ into $AP(H)$
that preserves trigonometric polynomials. Then there is a closed
subgroup $H_0\subseteq H$, a continuous group homomorphism $t$ of $H_0$ onto
$G$, an element $h_0\in H_0$, a character $\alpha \in \widehat{H}$ and an
unimodular complex number $a$ such that
$(Tf)(h)=a\cdot \alpha (h)~\cdot f(t(h-h_0))\text{ for
 all }h\in H_0\text{
and for all }f\in C(G)\text{.}$
\end{abstract}
\maketitle

\section{Introduction}

The deduction of topological (resp. algebraic) links between
topological groups $G$ and $H$ from the existence of certain
functional analytic relationships between appropriate spaces of
mappings defined on the groups has been widely studied in the
literature. For instance, the rings of all continuous functions or
the spaces of all integrable functions with respect to the Haar
measures of locally compact groups have been considered in that
direction. See (cf. \cite{degh, edwa, gali, rudi}) and the
references in those papers.

Here, we deal with isometries defined between certain spaces of continuous
functions 
in order to investigate 
when these isometries induce continuous 
group homomorphisms (defined
between subgroups of $H$ and $G$) representing the isometries canonically.

Our start point has been the following theorem of Holszty\'{n}ski
(cf. \cite{hols}) that is an extension of the well-known
Banach-Stone theorem. Let us say that a linear map
$T:C(X)\longrightarrow C(Y)$ is \textit{canonical}  when
$T(f)=w\cdot (f\circ t)$ for all $f\in C(X)$, where $w$ belongs to
$C(Y)$ and $t$ is a continuous map from $Y$ into $X$.

\begin{theor}
\label{th01} {\bf [Holszty\'{n}ski]} Let $X$ and $Y$ be compact
spaces and let $T$ be an isometry of $C(X)$ into $C(Y)$, then
there exists a closed subspace $Y_{0}$ of $Y$ and 
a canonical map $T_0$ of $C(X)$ onto $C(Y_0)$ such that the
following diagram commutes
\[
\xymatrix{ C(X) \ar@{>}[rr]^T \ar[dr]^{T_0} & & C(Y)\ar[dl]_{R} \\
& C(Y_0) &}
\]
here, $R$ denotes the restriction mapping from $C(Y)$ onto
$C(Y_0)$.
\end{theor}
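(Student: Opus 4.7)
The strategy is to pass to adjoints and exploit the Arens--Kelley description of the extreme points of the closed unit ball of $M(X)=C(X)^{*}$. For each $y\in Y$ the functional $f\mapsto(Tf)(y)$ on $C(X)$ has norm at most $\|T\|=1$, so by Riesz representation there is a regular complex Borel measure $\mu_{y}:=T^{*}\delta_{y}\in M(X)$ with $\|\mu_{y}\|\le 1$ and $(Tf)(y)=\int_{X}f\,d\mu_{y}$; the assignment $y\mapsto\mu_{y}$ is weak-$*$ continuous. By Arens--Kelley the extreme points of $B_{M(X)}$ are exactly the unimodular point masses $\alpha\delta_{x}$ ($|\alpha|=1$, $x\in X$), and they form a weak-$*$ closed subset $E$ of $M(X)$ homeomorphic to $\mathbb{T}\times X$.

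I set $Y_{0}:=\{y\in Y:\mu_{y}\in E\}$; continuity of $y\mapsto\mu_{y}$ makes $Y_{0}$ closed in $Y$. For $y\in Y_{0}$ write $\mu_{y}=w(y)\delta_{t(y)}$ with $|w(y)|=1$; the homeomorphism $E\cong\mathbb{T}\times X$ combined with weak-$*$ continuity yields $w\in C(Y_{0})$ and a continuous $t\colon Y_{0}\to X$. Then
\[
(Tf)(y)=w(y)\,f(t(y))\qquad(y\in Y_{0}),
\]
so $T_{0}:=R\circ T$ is canonical and the prescribed triangle commutes by construction.

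The substantive step is to show that $Y_{0}$ is large enough for $T_{0}$ to be isometric, and specifically that $t(Y_{0})=X$. The key input is that $T^{*}\colon M(Y)\to M(X)$ is a metric quotient map, being adjoint to an isometric embedding (by Hahn--Banach). Given $x\in X$, the fibre $F_{x}:=(T^{*})^{-1}(\delta_{x})\cap B_{M(Y)}$ is non-empty, weak-$*$ compact and convex, and is a face of $B_{M(Y)}$ because $\delta_{x}$ is extreme in $B_{M(X)}$. Krein--Milman supplies an extreme point $\mu^{*}\in F_{x}$, which is then extreme in $B_{M(Y)}$ and hence equal to $\alpha\delta_{y}$ for some $|\alpha|=1$ by Arens--Kelley; this forces $\mu_{y}=\bar\alpha\delta_{x}$, so $y\in Y_{0}$ with $t(y)=x$. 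Surjectivity of $t$ together with $|w|\equiv 1$ then gives $\|T_{0}f\|_{\infty}=\|f\|_{\infty}$ for every $f$.

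The principal obstacle is this final extremal argument: recognising $T^{*}$ as a quotient map, identifying each fibre over an extreme point as a face of $B_{M(Y)}$, and pulling extreme points back through it via Krein--Milman. The remainder is formal manipulation of the Riesz and Arens--Kelley dictionaries.
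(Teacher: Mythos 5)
Your proof is correct, but it follows a genuinely different route from the one the paper relies on. The paper does not prove Theorem~\ref{th01} at all --- it cites Holszty\'{n}ski \cite{hols} --- and the construction it actually uses (recapitulated inside the proofs of Theorems~\ref{th21} and~\ref{th32}) is the original elementary peak-set one: for $p\in X$ set $C_{p}=\{f:\|f\|=|f(p)|\}$ and $L(f)=\{q\in Y:\|Tf\|=|Tf(q)|\}$, show by a finite-intersection/compactness argument that $I_{p}=\bigcap\{L(f):f\in C_{p}\}\neq\emptyset$, and define $t(q)=p$ on $I_{p}$, $Y_{0}=\bigcup_{p}I_{p}$. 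You instead work in the dual: Riesz representation, the Arens--Kelley identification of $\mathrm{ext}\,B_{M(X)}$ with $\mathbb{T}\times X$, the fact that $T^{*}$ maps $B_{M(Y)}$ onto $B_{M(X)}$ by Hahn--Banach, and the observation that the fibre over an extreme point is a face, so Krein--Milman pulls extreme points back. All of these steps check out (the face argument and the deduction $\mu_{y}=\bar\alpha\delta_{x}$ are the crucial ones, and they are right), and your route makes the surjectivity of $t$ and the isometric nature of $T_{0}$ especially transparent. What the peak-set approach buys, and what you would need to supply if your construction were substituted into this paper, is the explicit description of the fibres $t^{-1}(p)=I_{p}$ as intersections of the sets $L(f)$: the later proofs (the countable intersection property in Theorem~\ref{th32}, and the identification ``$xy\in I_{a}$'' in Theorem~\ref{th21}) quote that description verbatim. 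The two constructions do produce the same $Y_{0}$ and $t$ --- one checks that $T^{*}\delta_{y}$ is a unimodular point mass at $p$ exactly when $y\in I_{p}$ --- but that equivalence is a small lemma you have not stated. Finally, note that the theorem's phrase ``onto $C(Y_{0})$'' is not literally attainable in general (take $X$ a singleton and $Y_{0}$ larger than a point); what you prove, namely that $t$ maps $Y_{0}$ onto $X$ and hence $T_{0}=R\circ T$ is an isometry, is the correct and intended content.
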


Section 2 is devoted to study the continuous group homomorphisms that arise out of isometries
defined between the spaces of continuous functions of two compact groups.
Here, a basic tool 
has been the notion of group representation 
and the well-known fact that the set of all 
continuous functions on an compact group is the uniform closure of the set of trigonometric polynomials defined on it.

In Section 3 we deal with the extension of the results obtained for compact groups
to a more general context. We show how, in
many cases, the compactness constraint hold on the groups involved may be relaxed if we consider
the space of almost periodic functions defined on them and impose an
additional natural requirement on the isometry. Namely, the isometry must preserve non-vanishing functions.
In fact, a main goal in this section has been to obtain a variant of the Tannaka-Kre\u{\i}n duality theory
for general maximally almost periodic locally compact (MAPLC) groups using the Banach space structure of the set of almost periodic functions.

\section{Compact Groups}

In this section,we are basically concerned with the following question: Assuming that we are
dealing with compact groups, which additional
hypothesis must be imposed on the isometry of Theorem \ref{th01} in order to obtaining continuous
homomorphisms instead of "plain" continuous mappings? Obviously, something has to be assumed
on the isometry since each homeomorphism, say $t$, gives place to the isometry $Tf=f\circ t$.
Here, we are going to consider isometries which behave nicely with respect
to the notion of group representation. Firstly, since our concern
is to introduce a variant of the Tannaka-Kre\u{\i}n duality theory
for general MAP locally compact groups, we recall here some basic
definitions about the Kre\u{\i}n algebra associated to any compact
group and the Tannaka-Kre\u{\i}n duality theory.

Suppose that $K$ is a (Hausdorff) compact group and let $\Sigma
=\Sigma (K)$ denote the set of equivalence classes of continuous
unitary irreducible representations of $K$. Choose a member of
each $\sigma \in \Sigma $, write it in matrix form $V^{\sigma
}=(v_{jk}^{\sigma })_{j,k=1}^{d(\sigma )}$ with respect to some
orthonormal basis, and let $B=B(K)$ consists of all the
''coordinates'' functions on $G$ so formed. In other words, $B$ is
the set of functions $x\longmapsto v_{jk}^{\sigma }(x)$, where
$\sigma \in \Sigma $ and $1\leq j,k\leq d(\sigma )$. The linear
span of these functions is independent of the selected members of
each $\sigma $ and the particular orthonormal bases. It is called
the space of \textit{trigonometric polynomials} on $K$ and is
closed under pointwise multiplication and complex-conjugation.
Denote it by $\mathcal{T}(K)$. The triple
$(\mathcal{T}(K),B,\Sigma )$ is a Kre\u{\i}n algebra
representative of $K$. In short, the Tannaka-Kre\u{\i}n theory
establishes that if $G_1$ and $G_2$ are compact groups that are
not topological isomorphs then they may not have isomorphic
Kre\u{\i}n algebras representatives. (See \cite[Section 30]{hr2},
\cite[V.4]{heyer} and \cite{frlupr} to find further information on
this topic.)

Let $\mathcal{U}(n)$ denote the {\em unitary group of order $n$},
namely, the group of all complex-valued $n\times n$ matrices $A$
for which $A^{-1}=\overline{A}^{*}$, i.e., the conjugate transpose
of $A$. 
In general, a {\em
unitary representation} $D$ of a (topological) group $G$ is a
(continuous) homomorphism into the group of all isometric linear
endomorphisms of a complex Hilbert space $\mathcal{E}$ (the latter
equipped with the topology defined by its inner product, see {\sc
Naimark}~\cite{naimark} (Chapter IV)). The space $\mathcal{E}$ is
the \textit{representation space of D}. When
dim~$\mathcal{E}<\infty$, we say that $D$ is a {\it finite
dimensional representation}; in this case, $D$ is a homomorphism
into one of the groups $\mathcal{U}(n)$. If $G$ is a topological
group, the symbol $Rep_n(G)$ denotes the set of all
representations of $G$ into $\mathcal{U}(n)$:
\[
Rep_n(G)=\{f:G\longrightarrow \mathcal{U}(n)\quad |\quad \hbox{%
$f$ is a continuous homomorphism}\}
\]
Now, suppose that $G$ and $H$ are compact Hausdorff groups and let
$T$ be a linear map of $C(G)$ into $C(H)$, we say that $T$
\textit{respects unitary representations} when for all $n<\omega $
and $D=(d_{jk})_{j,k=1}^{n}\in Rep_n(G)$, the matrix
$T(D)=(T(d_{jk}))_{j,k=1}^{n}$ defines a continuous representation
of $H$. Next we prove that the continuous mappings associated to
this kind of isometries preserve the algebraic structure of the
groups concerned.

\begin{theor}
\label{th21}Let $G$ and $H$ be compact Hausdorff groups and let $T$
be a linear isometry of $C(G)$ into $C(H)$
respecting unitary representations. Then there is a closed subgroup $H_{0}\subseteq H$, a
continuous group homomorphism $t$ of $H_{0}$ onto $G$ and an element
$\gamma \in \widehat{H}$ such that
\begin{equation*}
(Tf)(h)=\gamma (h)~f(t(h))\text{ for all }h\in H_{0}\text{ and for all }f\in
C(G)\text{.}
\end{equation*}
\end{theor}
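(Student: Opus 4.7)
The approach is to combine Holszty\'nski's Theorem~\ref{th01} with the Peter-Weyl theorem in order to promote the plain continuous surjection produced by Holszty\'nski to a group homomorphism defined on a closed subgroup of $H$.

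First, I would apply Theorem~\ref{th01} to obtain a closed set $H_1 \subseteq H$, a continuous surjection $t_1 : H_1 \to G$, and a continuous function $w : H_1 \to \mathbb{C}$ with $|w| = 1$ (since $T$ is an isometry) such that $(Tf)(h) = w(h) f(t_1(h))$ for all $h \in H_1$ and $f \in C(G)$. Observing that the constant function $1_G$ is the trivial one-dimensional unitary representation of $G$, the representation-respecting hypothesis forces $\gamma := T(1_G) \in Rep_1(H) = \widehat{H}$ to be a continuous character of $H$; specialising Holszty\'nski's formula at $f = 1_G$ shows $\gamma|_{H_1} = w$, so $w$ extends to a character defined on all of $H$.

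Next, for each irreducible representative $V^\sigma = (v^\sigma_{jk})$ of $\sigma \in \Sigma(G)$, the hypothesis ensures that $TV^\sigma := (Tv^\sigma_{jk})$ is a continuous unitary representation of $H$. Because $\gamma$ is scalar, $\widetilde{V^\sigma} := \overline{\gamma} \cdot TV^\sigma$ is again a continuous unitary representation of $H$, and Holszty\'nski's formula gives $\widetilde{V^\sigma}(h) = V^\sigma(t_1(h))$ for $h \in H_1$. I would then assemble these into continuous homomorphisms
\[
\iota : G \longrightarrow \prod_{\sigma \in \Sigma(G)} \mathcal{U}(d(\sigma)), \qquad \iota(g) = (V^\sigma(g))_\sigma,
\]
\[
\Psi : H \longrightarrow \prod_{\sigma \in \Sigma(G)} \mathcal{U}(d(\sigma)), \qquad \Psi(h) = (\widetilde{V^\sigma}(h))_\sigma.
\]
By the Peter-Weyl theorem, irreducible representations separate the points of the compact group $G$, so $\iota$ is injective; being continuous from compact $G$ to Hausdorff, $\iota$ is a topological embedding with $\iota(G)$ a closed subgroup of the product.

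Finally, I take $H_0 := \Psi^{-1}(\iota(G))$. Since $\Psi$ is a continuous homomorphism and $\iota(G)$ is a closed subgroup, $H_0$ is a closed subgroup of $H$, and $t := \iota^{-1} \circ \Psi|_{H_0} : H_0 \to G$ is a continuous group homomorphism. The identity $\widetilde{V^\sigma}|_{H_1} = V^\sigma \circ t_1$ gives $H_1 \subseteq H_0$ with $t|_{H_1} = t_1$, so the surjectivity of $t_1$ transfers to $t$. For any matrix coefficient $f = v^\sigma_{jk}$ and $h \in H_0$, I compute $(Tf)(h) = \gamma(h)\,\widetilde{V^\sigma}(h)_{jk} = \gamma(h)\, v^\sigma_{jk}(t(h)) = \gamma(h) f(t(h))$; by linearity of $T$ and the Peter-Weyl density of trigonometric polynomials in $C(G)$ recalled in the excerpt, this identity extends to every $f \in C(G)$. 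The crux is precisely the passage from \emph{continuous surjection} (the output of Theorem~\ref{th01}) to \emph{continuous homomorphism of a subgroup}; the representation-respecting hypothesis is used exactly to package the whole family $\{TV^\sigma\}_\sigma$ into the single homomorphism $\Psi$, whose preimage of the group $\iota(G)$ automatically inherits the desired group structure.
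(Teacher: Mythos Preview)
Your argument is correct and takes a genuinely different route from the paper's.

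The paper works directly with the Holszty\'nski set $H_0$ itself and shows that it is already a subgroup: for $x,y\in H_0$ it verifies, using the Tannaka--Kre\u{\i}n duality theorem, that the functional $\delta_{xy}\circ T$ is multiplicative and conjugation-preserving on $\mathcal{T}(G)$, hence equals some point mass $\delta_a$; this forces $xy\in I_a\subseteq H_0$ via the explicit description $H_0=\bigcup_{p}I_p$ from Holszty\'nski's construction. Your approach instead builds a \emph{new} closed subgroup $H_0=\Psi^{-1}(\iota(G))$, which contains the Holszty\'nski set $H_1$ but may be strictly larger; the group structure of $H_0$ is then automatic from the fact that $\Psi$ is a homomorphism. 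What you gain is that you only invoke Peter--Weyl (separation of points) rather than the full Tannaka--Kre\u{\i}n characterisation of point evaluations, and you treat Theorem~\ref{th01} as a black box without reopening the $I_p$ machinery. What the paper's approach buys is the sharper conclusion that the Holszty\'nski set itself is the subgroup, and, more importantly for the sequel, the explicit $I_p$ description is reused verbatim in the locally compact extension (Theorem~\ref{th32}), where one must argue that $I_x\cap H\neq\emptyset$ for $x\in G$; your packaging via $\Psi$ would not transfer as directly to that setting.

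One cosmetic point: the hypothesis only guarantees that $T(V^\sigma)$ is a continuous representation, not a priori a \emph{unitary} one, so strictly speaking $\Psi$ should land in $\prod_\sigma GL(d(\sigma),\mathbb{C})$ rather than $\prod_\sigma\mathcal{U}(d(\sigma))$. This changes nothing in your proof, since $\iota(G)$ is compact and hence closed in either product.
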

\begin{proof}
There is no loss of generality in assuming that $T(1)=1$. Otherwise,
since $T$ respects unitary representations, we have that $T(1)=\gamma $ must be a
linear representation (that is, a continuous group homomorphism
of $H$ into $\mathbb{T}$, the unity circle of the complex plane).
Then the map $R=\gamma ^{-1}\cdot T$
also is an isometry of $C(G)$ into $C(H)$ that respects unitary
representations and $R(1)=1$.

By Theorem \ref{th01} there is a closed subset $H_{0}\subseteq H$,
a continuous mapping $t$ of $H_{0}$ onto $G$ and a unimodular
continuous map $w:H_{0}:\longrightarrow \mathbb{C}$ such that
$(Tf)_{|H_{0}}=w\cdot (f\circ t)$ for all $f\in C(G)$. Hence, if
$f\equiv 1$ on $G$, we have that $1=T(1)=w\cdot1 $. That is,
$w\equiv 1$ on $H_{0}$. Furthermore, for $f$ and $g$ in $C(G)$, we
have that $T(f\cdot g)_{|H_{0}}=(f\cdot g)\circ t=(f\circ t)\cdot
(g\circ t)= T(f)_{|H_{0}}\cdot T(g)_{|Y_{0}}$. Since $t$ is an
onto map, it follows that $T$ is a multiplicative isometry of
$C(G)$ into $C^{\ast}(H_{0})$. On the other hand, according to
(Holszty\'{n}ski) Theorem \ref{th01}, the mapping $t$ is defined
as follows: for every point $p\in G$ define $C_{p}=\{f\in
C(G):\parallel~f~\parallel =~|f(p)|\}$ and for $f\in C(G)$ set
$L(f)=\{q\in H:||T(f)||=~|T(f)(q)|\}$. If $I_{p}=\cap \{L(f):f\in
C_{p}\}$ then $I_{p}$ is a non empty closed subset of $H$,
$H_{0}=\cup_{p\in G} I_{p}$ and $t(q)=p$ for all $q\in I_{p}$ (cf.
\cite{hols}). Using this fact, we now check that $H_0$ is a
subgroup of $H$ and $t$ is a group homomorphism.

In order to do it, take two points $x,y\in H_0$ and 
denote by $\delta _{xy}$ the point mass evaluated at ${xy}$. We claim that
there is a singleton $a\in G$ such that
$\delta _{xy}\circ T$ coincides on $C(G)$ with the point mass $\delta_{a}$.
In fact, after the Tannaka-Kre\v{\i}n duality theorem (cf. \cite[(30.30)]{hr2}),
it is enough to verify that $\delta _{xy}\circ T$
is a multiplicative linear functional on $\mathcal{T}(G)$ such that
$(\delta _{xy}\circ T)(\widetilde{f})=\overline{(\delta _{xy}\circ T)(f)}$
for all $f\in \mathcal{T}(G)$.

Let us see that $\mathcal{\delta }_{xy}\circ T$ is multiplicative.
According to \cite[(30.26)]{hr2}, we must prove that
$(\delta _{xy}\circ T)(D\otimes E)=
(\delta _{xy}\circ T)(D)\otimes (\delta_{xy}\circ T)(E)$
for all $D \in Rep_{n}(G), E \in Rep_{m}(G)$ and $n, m<\omega$. Now,
$$
(\delta _{xy}\circ T)(D\otimes E)
=\delta _{xy}(T(D\otimes E))
=T(D\otimes E)(xy)
$$
and, since $T$ respects unitary representations, we have that
\begin{equation*}
\begin{array}{ll}
T(D\otimes E)(xy) &=  \\
T(D\otimes E)(x)\cdot T(D\otimes E)(y) &= \\
(\delta _{x}\circ T)(D\otimes E)\cdot (\delta _{y}\circ T)(D\otimes E)\text{.}&
\end{array}
\end{equation*}
On the other hand, $(\delta _{x}\circ T)$ and $(\delta _{y}\circ T)$ are
multiplicative linear functionals on $\mathcal{T}(G)$ because $x$ and $y$
belong both to $H_{0}$ and $(Tf)_{|H_{0}}=f\circ t$ for all $f\in C(G)$.
That is $(\delta_{x}\circ T)(D)=(TD)(x)=D(t(x))=\delta_{t(x)}(D)$ for all $x\in H_0$, $D\in Rep_{n}(G)$,
and $n<\omega$. Hence 

\begin{equation*}
\begin{array}{ll}
(\delta _{xy}\circ T)(D\otimes E) &= \\
(\delta _{x}\circ T)(D\otimes E)\cdot (\delta _{y}\circ T)(D\otimes E) &= \\
\lbrack \delta _{t(x)}(D\otimes E)]\cdot \lbrack \delta _{t(y)}(D\otimes E)]
&= \\
(D\otimes E)(t(x))\cdot (D\otimes E)(t(y)) &= \\
(D\otimes E)(t(x)t(y)) &= \\
D(t(x)t(y))\otimes E(t(x)t(y)) &= \\
\lbrack D(t(x))\cdot D(t(y))]\otimes \lbrack E(t(x))\cdot E(t(y))] &= \\
\lbrack \delta _{t(x)}(D)\cdot \delta _{t(y)}(D)]\otimes \lbrack \delta
_{t(x)}(E)\cdot \delta _{t(y)}(E)] &= \\
\lbrack (\delta _{x}\circ T)(D)\cdot (\delta _{y}\circ T)(D)]\otimes
\lbrack (\delta _{x}\circ T)(E)\cdot (\delta _{y}\circ T)(E)] &= \\
\lbrack (TD)(x)\cdot (TD)(y)]\otimes \lbrack (TE)(x)\cdot (TE)(y)] &= \\
(TD)(xy)\otimes (TE)(xy) &= \\
\delta _{xy}(T(D))\otimes \delta _{xy}(T(E)) &= \\
(\delta _{xy}\circ T)(D)\otimes (\delta _{xy}\circ T)(E)\text{.}&
\end{array}
\end{equation*}
\newline
The equality
\begin{equation*}
(\delta _{xy}\circ T)(\widetilde{f})=\overline{(\delta _{xy}\circ T)(f)}
\end{equation*}
for all $f\in \mathcal{T}(G)$ is handled similarly using \cite[(30.2)]{hr2}.
By the Tannaka-Kre\v{\i}n duality theorem (cf. \cite[(30.30)]{hr2}),
we have proved that $\delta _{xy}\circ T$ coincides with a point mass evaluated
at a point $a\in G$. 
Thus, $(Tf)(xy)=(\delta _{xy}\circ~T)(f)=f(a)$ for all $f\in C(G)$. 
By the way in which $t$ was defined above, it follows that
$xy\in I_{a}$, $a\in H_0$ and $t(xy)=a$.
Hence, $(Tf)(xy)=f(t(xy))$ for all $f\in C(G)$.

On the other hand, since $T$ respects unitary representations,
for every $D\in Rep_{n}(G)$, we have that
\begin{equation*}
\begin{array}{ll}
D(t(xy))=(d_{jk}(t(xy)) &= \\
(T(d_{jk})(xy)) &= \\
(TD)(xy) &= \\
(TD)(x)(TD)(y) &= \\
(T(d_{jk})(x))(T(d_{jk})(y)) &= \\
D(t(x))D(t(y)) &= \\
D(t(x)t(y)). &
\end{array}
\end{equation*}
\qquad For compact groups, unitary representations separate points, therefore,
$t(xy)=t(x)t(y)$. 
Analogously, it is verified that for every $x\in H_{0}$, it holds that
$x^{-1}\in H_0$ and $t(x^{-1})=t(x)^{-1}$. Thus, we conclude that
$H_0$ is a subgroup of $H$ and $t$ is a continuous group homomorphism of $H_0$ onto $G$.
This completes the proof.
\end{proof}
An easy consequence of Theorem \ref{th21} is the following result

\begin{co}
\label{co22}Let $G$ and $H$ be two compact groups and suppose
that $T$ is a linear isometry of $C(G)$ onto $C(H)$
that respects unitary representations. Then there exists a topological isomorphism $t$ of $H$ onto $G$ and an element $\gamma \in \widehat{H}$ such that
\begin{equation*}
(Tf)(h)=\gamma (h)~f(t(h))\text{ for all }h\in H\text{ and for all }f\in
C(G)\text{.}
\end{equation*}
Hence, $G$ and $H$ are isomorphic compact groups.
\end{co}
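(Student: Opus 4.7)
My plan is to deduce this corollary directly from Theorem~\ref{th21}, using surjectivity of $T$ to promote the conclusions of that theorem. I would first apply Theorem~\ref{th21} to obtain a closed subgroup $H_0\subseteq H$, a continuous group homomorphism $t$ of $H_0$ onto $G$, and a character $\gamma\in\widehat{H}$ satisfying $(Tf)(h)=\gamma(h)\,f(t(h))$ for all $h\in H_0$ and $f\in C(G)$. The remaining work is to show that $H_0=H$ and that $t$ is injective.

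To establish $H_0=H$, I would argue by contradiction. If $H_0\subsetneq H$, then since $H$ is compact Hausdorff, Urysohn's lemma furnishes a nonzero $g\in C(H)$ vanishing on $H_0$. Since $T$ maps $C(G)$ onto $C(H)$, there is $f\in C(G)$ with $g=Tf$. Restricting the identity $(Tf)(h)=\gamma(h)\,f(t(h))$ to $H_0$, and using that $\gamma$ is unimodular and that $t:H_0\to G$ is surjective, I conclude $f\equiv 0$, hence $g=Tf=0$, contradicting the choice of $g$.

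To show $t$ is injective, suppose $h_1,h_2\in H$ with $h_1\neq h_2$ and $t(h_1)=t(h_2)$. Urysohn again produces $g\in C(H)$ with $g(h_1)=0$ and $g(h_2)=1$. Writing $g=Tf$, the formula forces $\gamma(h_1)\,f(t(h_1))=0$ and $\gamma(h_2)\,f(t(h_2))=1$, contradicting $t(h_1)=t(h_2)$ together with $|\gamma|=1$. Thus $t:H\to G$ is a continuous bijective group homomorphism between compact Hausdorff groups, and therefore automatically a topological isomorphism.

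No step appears to present a serious obstacle: Theorem~\ref{th21} encodes all of the Tannaka-Kre\u{\i}n content, and the upgrade reduces to two short Urysohn-style separation arguments that exploit the unimodularity of $\gamma$. The point worth emphasizing is that the map $t$ produced by Theorem~\ref{th21} is a priori only surjective, and injectivity is precisely what the extra hypothesis that $T$ be onto buys.
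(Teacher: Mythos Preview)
Your proposal is correct and follows exactly the approach the paper intends: the paper states the corollary as ``an easy consequence of Theorem~\ref{th21}'' without spelling out a proof, and your two Urysohn-style arguments (using surjectivity of $T$ to force $H_0=H$ and then injectivity of $t$) are precisely the natural way to fill in the details.
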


In general the subgroup $H_{0}$ in Theorem \ref{th21} need not be equal to
$H$. Indeed, if we take
$G$ as the group $\{-1,1\}$ and $H$ equal to $\Bbb{T}$, then no isometry
of $C(G)$ into $C(H)$
may be represented by a continuous mapping of $H$ onto $G$.

When the groups are Abelian and connected, the results above can be
improved considerably.
It is well known that every unitary representation of a $LC$ Abelian group is unitary equivalent
to a direct sum of $1$-dimensional unitary representations and, as a consequence, one replaces
the representation space $Rep(G)$ by the dual group $\widehat{G}$
of all continuous characters on $G$ (irreducible elements of $Rep_1(G)$).
Thus, the ring of trigonometric polynomials, $\mathcal{T}(G)$,
is generated by $\widehat{G}$ in this case.
Given two compact groups $G$ and $H$ and an isometry $T$
of $C(G)$ into $C(H)$, we say that $T$
{\it preserves trigonometric polynomials } when
for every $\phi\in \mathcal{T}(G)$ 
it holds that $T(\phi)\in \mathcal{T}(H)$. 

\begin{theor}
\label{th23}Let $G$ and $H$ be compact groups, with $H$ connected. Suppose
that $T$ is a linear isometry of $C(G)$ into $C(H)$ that preserves
trigonometric polynomials.
Then there is a closed subgroup $H_{0}\subseteq H$, a continuous group homomorphism
$t$ of $H_{0}$ onto $G$,
an element $h_{0}\in H_{0}$, a character $\alpha \in \widehat{H}$ and an
unimodular complex number $a$ such that
\begin{equation*}
(Tf)(h)=a\cdot \alpha (h)~\cdot f(t(h-h_{0}))\text{ for all }h\in H_{0}\text{
and for all }f\in C(G)\text{.}
\end{equation*}
Moreover, if $T$ is an onto isometry then $H_{0}=H$ and, as a consequence,
$G$ and $H$ are topologically isomorphic.
\end{theor}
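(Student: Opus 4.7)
My plan is to follow the argument of Theorem~\ref{th21} for the abelian case, with the connectedness of $H$ providing enough rigidity to upgrade the hypothesis ``preserves trigonometric polynomials'' to effectively ``respects unitary representations.'' I begin by applying Holszty\'nski's theorem (Theorem~\ref{th01}) to $T$: this produces a closed subset $H_0\subseteq H$, a continuous surjection $\tau:H_0\to G$ and a continuous unimodular $w:H_0\to\mathbb{T}$ with $T(f)|_{H_0}=w\cdot(f\circ\tau)$ for every $f\in C(G)$. Since $T$ preserves trigonometric polynomials, $T(1)\in\mathcal{T}(H)$ and $T(1)|_{H_0}=w$ is unimodular.

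The key technical ingredient, specific to connected abelian groups, is the following lemma: on a compact connected abelian group $H$, every unimodular trigonometric polynomial has the form $a\alpha$ with $a\in\mathbb{T}$ and $\alpha\in\widehat{H}$. I would prove this using that $\widehat{H}$ is torsion-free (by connectedness of $H$) and therefore admits a translation-invariant total order $<$. Writing a unimodular $p=\sum_{\chi\in E}\hat p(\chi)\chi$ with finite spectrum $E$, the identity $p\bar p\equiv 1$ forces every non-trivial Fourier coefficient of $p\bar p$ to vanish, in particular the one at $\chi_{\max}-\chi_{\min}$, which equals $\hat p(\chi_{\max})\overline{\hat p(\chi_{\min})}$; this is non-zero unless $|E|=1$.

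Using this lemma I extract $a\in\mathbb{T}$ and $\alpha\in\widehat{H}$ with $w=a\alpha|_{H_0}$, and set $T':=\bar{a}\,\bar{\alpha}\cdot T$, a new linear isometry preserving trigonometric polynomials with $T'(1)|_{H_0}=1$. Applying the lemma once again (now to each $T'(\chi)$, $\chi\in\widehat{G}$) identifies $T'(\chi)$ with an element of $\widehat{H}$; consequently $T'$ respects unitary representations, and Theorem~\ref{th21} applied to $T'$ yields that $H_0$ is a closed subgroup of $H$ together with a continuous onto homomorphism $t:H_0\to G$ satisfying $T'(f)(h)=f(t(h))$ for $h\in H_0$. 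Substituting back, $T(f)(h)=a\alpha(h)f(t(h))$ on $H_0$. This matches the stated formula with $h_0=0_H$; the more flexible form with arbitrary $h_0\in H_0$ arises by reparametrising $t$ via translation (any shift in $t$ can be absorbed into the argument $h-h_0$ using that $t$ is onto $G$). The ``moreover'' clause then follows directly from Corollary~\ref{co22}.

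The principal obstacle is the second application of the key lemma to $T'(\chi)$. The lemma requires unimodularity on \emph{all} of $H$, whereas one has unimodularity only on $H_0$ a priori. In the onto case $H_0=H$ makes this automatic and the lemma applies globally; in general, the joint use of the isometry property, trigonometric-polynomial-preservation and connectedness of $H$ is needed in order to conclude that each $T'(\chi)$ is genuinely a character of $H$, rather than a multi-term trigonometric polynomial whose restriction to $H_0$ happens to coincide with a character.
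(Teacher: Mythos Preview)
Your overall strategy matches the paper's: normalise by $T(1)$, use the structure of unimodular trigonometric polynomials on a connected compact abelian group (the paper cites Glicksberg \cite[(1.1)]{glick2} for your key lemma), show that the modified isometry sends characters to characters, and then invoke Theorem~\ref{th21}. However, there is a genuine gap, and it is not quite the one you flag at the end.

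The real problem is the claim that $T'$ respects unitary representations. Even granting that your lemma applies globally and yields $T'(\chi)=b(\chi)\gamma(\chi)$ with $b(\chi)\in\mathbb{T}$ and $\gamma(\chi)\in\widehat{H}$, you have not shown $b(\chi)=1$, and in general it is not: take $G=H=\mathbb{T}$ and $T(f)(h)=f(h-h_0)$ for some fixed $h_0\neq 0$. Then $T(1)=1$, so $T'=T$, yet $T'(\chi_n)=e^{-inh_0}\chi_n$ is \emph{not} a character of $H$. Hence $T'$ does not respect unitary representations and Theorem~\ref{th21} cannot be invoked. Your remark that $h_0=0_H$ suffices and the general $h_0$ is a mere ``reparametrisation'' is therefore backwards: the shift is what makes the argument work, not a cosmetic afterthought. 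The paper inserts exactly this missing step. After setting $R=T(1)^{-1}T$ and obtaining Holszty\'nski data $(Y,r)$ for $R$, one picks $h_0\in Y$ with $r(h_0)=1_G$ and defines $S(f)(h)=R(f)(h+h_0)$; then $1_H$ lies in the translated set and maps to $1_G$, so evaluating $S(\chi)=b(\chi)\gamma(\chi)$ at $1_H$ forces $b(\chi)=1$. Now $S$ genuinely sends characters to characters, Theorem~\ref{th21} applies to $S$, and unwinding the translation is what produces the $h_0$ in the stated formula. (Regarding the obstacle you do flag: note that it already affects your \emph{first} application of the lemma, to $T(1)$, not only the second; the paper handles both by citing \cite[(1.1)]{glick2} directly rather than the purely unimodular version you prove.)
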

\begin{proof}
Since $T(1)$ is an unimodular trigonometric polynomial, we know
(\cite[(1.1)]{glick2}) that there is a character $\alpha \in \widehat{H}$ and
a unimodular complex number $a$ such that $T(1)=a\cdot \alpha $. Set
$R=T(1)^{-1}\cdot T$. It is clear that $R(1)=1$ and $R$ preserves
trigonometric polynomials. By Theorem \ref{th01}
there is a closed subset $Y\subseteq H$, a continuous map $r$ of $Y$ onto $G$ and an element
$w\in C^{\ast }(Y)$, $\mid w\mid \equiv 1$, such that $(Rf)(y)=w(y)~f(r(y))$
for all $y\in Y$ and $f\in C(G)$.
On the other hand, since $R(1)=1$, it follows that $w\equiv 1$ on $Y$.
So that $R(f)_{|Y}=f\circ r$ for all $f\in C(G)$.

Now, choose $h_0\in Y$ with $r(h_0)=1_G$, the neutral element in $G$. If we define
$S(f)=(R(f))_{h_0}$; that is $(R(f))_{h_0}(h)=(Rf)(h+h_0)$ for all $h\in H$
and $f\in C(G)$. Then $S$ is an isometry
of $C(G)$ into $C(H)$ that preserves trigonometric polynomials and with $S(1)=1$.
Moreover, defining $Y_0 =\{y-h_0 : y\in Y\}$, it holds that $(Sf)_{|Y_0}=f\circ s$,
where $s$ is a continuous mapping of $Y_0$ onto $G$ defined by $s(h)=r(h+h_0 )$.
Therefore, $1_H \in Y_0$ and $s(1_H )=1_G$.
Notice that, when $\chi \in \widehat{G}$, (\cite[(1.1)]{glick2}) yields that
$S(\chi )=b(\chi )\cdot \gamma (\chi )$
with $\gamma (\chi )\in \widehat{H}$ and $b(\chi )$ an unimodular complex
number. So that $(b(\chi )\cdot \gamma (\chi ))_{|Y_0}=\chi \circ s$.
Then $b(\chi )\cdot \gamma (\chi )(1_H )=\chi (1_G )=1$. Thus, $b(\chi )=1$
and, as a consequence, $S(\chi )=\gamma (\chi )\in \widehat{H}$.
In other words, $S$ is multiplicative isometry of $C(G)$ into $C(H)$ that preserves
characters.
Thus, we are in position to apply Theorem \ref{th21}.
Hence, there exists a closed subgroup $H_{0}\subseteq H$, a continuous
group homomorphism $t$ of $H_{0}$ onto $G$ and an character $\beta \in \widehat{H}$ such that
\begin{equation*}
(Sf)(h)=\beta (h)~f(t(h))\hbox{ for all }h\in H_{0}\hbox{ and for all }f\in C(G)\hbox{.}
\end{equation*}
Hence,
\begin{equation*}
\begin{array}{ll}
T(f)(h)= a\cdot \alpha (h)\cdot (Rf)(h) &= \\
a\cdot \alpha (h)\cdot (Sf)(h-h_0) &= \\
a\cdot \alpha (h)\cdot \beta (h)\cdot f(t(h-h_0)) &= \\
a\cdot \lambda (h)\cdot f(t(h-h_0)).&
\end{array}
\end{equation*}
where $\lambda =\alpha \cdot \beta \in \widehat{H}$.

Finally, when $T$ is a onto isometry, it is clear that $H_{0}$ coincides with $H$
and, therefore, $t$ is a topological isomorphism. This completes the proof.
\end{proof}
\begin{co}
\label{co24}Let $\rho :H\longrightarrow G$ where $G$ and $H$ are compact Abelian
groups and $H$ is connected. Then $\rho $ preserves trigonometric
polynomials iff $\rho =t+\theta $ where $t$ is a continuous group homomorphism
and $\theta $ is a constant map.
\end{co}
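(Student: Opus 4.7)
The plan is to reduce the corollary to an application of Pontryagin duality. The $(\Leftarrow)$ direction is immediate: if $\rho=t+\theta$, then for every $\chi\in\widehat{G}$ one has $\chi\circ\rho=\chi(\theta)\cdot(\chi\circ t)$, a scalar multiple of an element of $\widehat{H}\subset\mathcal{T}(H)$; linearity extends this to all of $\mathcal{T}(G)$.

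For the nontrivial implication, suppose $\rho$ preserves trigonometric polynomials. I first observe that $\rho$ is automatically continuous, since $\chi\circ\rho\in C(H)$ for every $\chi\in\widehat{G}$ and $\mathcal{T}(G)$ is uniformly dense in $C(G)$, which forces $f\circ\rho\in C(H)$ for every $f\in C(G)$. The key structural input will then be the same Glicksberg-type result \cite[(1.1)]{glick2} used in the proof of Theorem \ref{th23}: for every $\chi\in\widehat{G}$, the function $\chi\circ\rho$ is a unimodular element of $\mathcal{T}(H)$ on the compact connected abelian group $H$, so it admits a unique factorization
\[
\chi\circ\rho=b(\chi)\cdot\gamma(\chi),\qquad b(\chi)\in\mathbb{T},\ \gamma(\chi)\in\widehat{H},
\]
where uniqueness follows by normalizing $\gamma(\chi)(1_H)=1$. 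Evaluation at $1_H$ then identifies $b(\chi)=\chi(\rho(1_H))$, and setting $\theta:=\rho(1_H)\in G$ gives $b(\chi)=\chi(\theta)$.

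The main algebraic step I would carry out is to verify that $\gamma:\widehat{G}\to\widehat{H}$ is a group homomorphism. Expanding $(\chi_1\chi_2)\circ\rho=(\chi_1\circ\rho)(\chi_2\circ\rho)$ via the above factorization in two different ways and invoking uniqueness forces both $b$ and $\gamma$ to be multiplicative. Because $\widehat{G}$ and $\widehat{H}$ are discrete, $\gamma$ is automatically continuous, so Pontryagin duality produces a continuous homomorphism $t:H\to G$ satisfying $\chi\circ t=\gamma(\chi)$ for every $\chi\in\widehat{G}$.

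To finish, for each $h\in H$ and $\chi\in\widehat{G}$ I would compute
\[
\chi(\rho(h))=b(\chi)\gamma(\chi)(h)=\chi(\theta)\chi(t(h))=\chi(t(h)+\theta),
\]
and since characters of $G$ separate points the conclusion $\rho=t+\theta$ follows. I do not anticipate a serious obstacle; the only delicate point is the unique factorization of unimodular trigonometric polynomials on $H$, which is precisely where the connectedness hypothesis on $H$ enters through \cite[(1.1)]{glick2}.
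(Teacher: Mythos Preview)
Your argument is correct, and it proceeds by a genuinely different route from the paper's. The paper defines the isometry $T:C(G)\to C(H)$ by $T(f)=f\circ\rho$, invokes Theorem~\ref{th23} (so ultimately Holszty\'nski's theorem and the Tannaka--Kre\u{\i}n machinery) to obtain the canonical form $(Tf)(h)=a\cdot\alpha(h)\cdot f(t(h-h_0))$ on a subgroup $H_0$, and then uses multiplicativity of $T$ to force $a=1$, $\alpha\equiv 1$, and $H_0=H$. Your proof instead bypasses the Banach--Stone framework entirely: you apply Glicksberg's factorization directly to each $\chi\circ\rho$, assemble the resulting characters into a homomorphism $\gamma:\widehat{G}\to\widehat{H}$, and dualize via Pontryagin to recover $t$. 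This is more elementary and self-contained for the Abelian case---it uses only the Glicksberg lemma and Pontryagin duality, never Theorem~\ref{th01} or Theorem~\ref{th21}---whereas the paper's approach has the advantage of exhibiting the corollary as an immediate byproduct of the general isometry theorem it has just established.
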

\begin{proof}
\textit{Necessity} is obvious.

\noindent \textit{Sufficiency.} Set $T:C(G)\longrightarrow C(H)$ defined by
$T(f)(y)=f(\rho (y))$. It is easily checked that $T$ is a multiplicative
linear isometry of $C(G)$ into $C(H)$ that preserves trigonometric polynomials.
Applying Theorem \ref{th21}, there is a closed subgroup $H_{0}\subseteq H$, a
continuous group homomorphism $t$ of $H_{0}$ onto $G$,
an element $h_{0}\in H_{0}$, a character $\alpha \in \widehat{H}$ and an
unimodular complex number $a$ such that
\begin{equation*}
(Tf)(h)=a\cdot \alpha (h)~\cdot f(t(h-h_{0}))\text{ for all }h\in H_{0}\text{
and for all }f\in C(G)\text{.}
\end{equation*}
Now, since $T$ is multiplicative, it follows that $\alpha \equiv 1=a$. On
the other hand, because of the way in which $T$ was defined it is clear that
$H_{0}=H$. 
Hence $\rho(h)=t(h-h_{0})=t(h)-t(h_{0})$ for all $h\in H$.
\end{proof}
\section{$\sigma$-compact Locally Compact Groups}
The aim of this section is to extend to maximally almost periodic
$\sigma$-compact locally compact groups
the results obtained previously for compact groups. 

For a topological group $G$, $AP(G)$ denotes the set of all almost
periodic functions on $G$; that is, $AP(G)$ consists of all
complex-valued functions $f$ defined on $G$ such that for every
$\epsilon >0$ there is a finite decomposition
$G=\cup_{i=1}^{n}G_{i}$ with $|f(zxw)-f(zyw)|<\epsilon $ for all
$z,w\in G$ and $x,y\in G_{i}$, $i=1,...,n$. With every topological
group $G$ there is associated a compact group $bG$, the so-called
\textit{Bohr compactification} of $G$, and a continuous
homomorphism $b$ from $G$ onto a dense subgroup of $bG$. Among
such compact groups and continuous homomorphisms, $bG$ and $b$ are
determined by this property: for every continuous homomorphism $h$
from $G$ into a compact group $K$ there is a continuous
homomorphism $\overline{h}$ from $bG$ onto $K$ such that
$h=\overline{h}\circ b$, that is, making the following diagram
commutative:
\[
\xymatrix{ G \ar@{>}[rr]^b \ar[dr]^h & & bG\ar[dl]_{h^b} \\ & K &}
\]
The group $b(G)$ receives a totally bounded group topology
inherited from $bG$. It is usually called the Bohr topology of
$G$. (See \cite[V.4]{heyer} for a full examination of $bG$ and its
properties.) It is known that every finite-dimensional continuous
unitary representation of $G$ extends to continuous representation
on $bG$ (cf. \cite[V.4]{heyer}). As a consequence, there is no
loss of generality in identifying the representations spaces of
$G$ and $bG$. In the sequel, this identification is always
implicitely assumed. The set of all almost periodic functions
$AP(G)$ on a group $G$ coincide with the restrictions to $G$ of
the continuous functions defined on $bG$. Hence, if we denote by
$G^{b}$ the group $G$ equipped with the topology inherited from
$bG$ (the Bohr topology), we have that $AP(G)$ is a Banach
subalgebra of $C^{\ast}(G^{b})$.
Among the representations of $G$, the linear representations, i.e. with its degree $d(\sigma)=1$, form a group $\widehat{G}$ under multiplication. 
The group $\widehat{G}$, equipped with the compact open topology, is called the {\it dual group} of $G$. 
Next we shall apply the methods of the
Section above to investigate how the Banach algebra of all almost periodic functions
determines the topological and algebraic structure of maximally almost periodic locally compact groups.
First, we shall need to recall a few definitions and the following simple lemma.
A topological group $G$ is said to \textit{respect compactness }(cf. \cite{tri1}) when any subset of $G$, which is compact in the Bohr topology, is also compact in the original topology of $G$. 
Given a topological group $G$, by $G^{b}$ we mean the same
algebraic group $G$ equipped with the Bohr topology. For $G$ and
$H$, and $T$ a linear map of $AP(G)$ into $AP(H)$, we say that $T$
is {\it non-vanishing} when $(Tf)(y)\not=0$ for all $y\in H$ if
and only if $f(x)\not=0$ for all $x\in G$. In the sequel all
groups are assumed to be maximally almost periodic.
\begin{lem}
\label{le31} Let Let $G$ and $H$ be $LC$ groups and let $T$ be a
non-vanishing linear isometry of $AP(G)$ into $AP(H)$ that
respects (finite dimensional) unitary representations. Then, for
every $f\in AP(G)$, $\|f\|=|f(x)|$ for some $x\in G$ if and only
if $\|T(f)\|=|(Tf)(y)|$ for some $y\in H$.
\end{lem}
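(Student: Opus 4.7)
The plan is to translate ``$\|f\|$ is attained on $G$'' into ``some function has a zero on $G$,'' so that the non-vanishing hypothesis transports the condition to $H$. I will work with a one-parameter family of test functions indexed by the unit circle.

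Normalizing away the trivial case $f\equiv 0$, I assume $\|f\|=1$. Because $T$ respects unitary representations and the constant function $\mathbf{1}$ is a $1$-dimensional unitary representation of $G$, the image $\gamma := T(\mathbf{1})$ lies in $\widehat{H}$, so in particular $|\gamma(y)|=1$ for every $y\in H$. For each $\alpha\in\mathbb{T}$, I will consider $\phi_\alpha := \mathbf{1} - \alpha f \in AP(G)$, whose image is $T(\phi_\alpha) = \gamma - \alpha\,T(f) \in AP(H)$.

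The key elementary observation is phase-matching: $\phi_\alpha$ vanishes at $x\in G$ iff $f(x)=\bar\alpha$, and $T(\phi_\alpha)$ vanishes at $y\in H$ iff $T(f)(y)=\gamma(y)\bar\alpha$. In either case the modulus on the right is $1$, i.e., it is exactly the condition that $f$ (respectively $T(f)$) attains its norm at that point, and as $\alpha$ ranges over $\mathbb{T}$ every possible attaining ``phase'' is realized. Applying the non-vanishing hypothesis to $\phi_\alpha$ now yields the equivalence for a suitable choice of $\alpha$: for the forward direction, take $\alpha_0 := \overline{f(x_0)}$, so that $\phi_{\alpha_0}(x_0)=0$ and hence $T(\phi_{\alpha_0})$ has a zero $y_0\in H$, forcing $|T(f)(y_0)|=1$; for the reverse, take $\alpha_0 := \gamma(y_0)\,\overline{T(f)(y_0)}$, so that $T(\phi_{\alpha_0})(y_0)=0$ and hence $\phi_{\alpha_0}$ has a zero $x_0\in G$, forcing $|f(x_0)|=1$.

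There is no real obstacle here; the whole argument boils down to choosing the test family $\{\phi_\alpha\}$ so that a zero of $\phi_\alpha$ records both ``the norm is attained'' and ``the phase of the attaining value is $\bar\alpha$,'' and noting that the $\gamma$-twist coming from $T(\mathbf{1})$ only rotates the phase without changing the modulus. This is also the only place where ``respects unitary representations'' enters the argument, purely through the $1$-dimensional case giving $T(\mathbf{1})\in\widehat{H}$.
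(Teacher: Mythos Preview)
Your proof is correct and is essentially the paper's own argument: both pick a unimodular scalar so that the test function $\lambda\cdot\mathbf{1}-f$ (equivalently your $\mathbf{1}-\alpha f$ after normalizing $\|f\|=1$) vanishes exactly where $f$ attains its norm, and then invoke the non-vanishing hypothesis. The only cosmetic difference is that the paper first replaces $T$ by $\gamma^{-1}\cdot T$ to arrange $T(\mathbf{1})=\mathbf{1}$, whereas you keep the twist $\gamma=T(\mathbf{1})$ explicit; since $|\gamma|\equiv 1$ this changes nothing.
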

\begin{proof}
There is no loss of generality in assuming that $T(1)=1$. Otherwise,
since $T$ respects unitary representations, we have that $T(1)=\gamma $ must be a
linear representation (that is, a continuous group homomorphism
of $H$ into $\mathbb{T}$, the unity circle of the complex plane).
Then the map $R=\gamma ^{-1}\cdot T$
also is an isometry of $AP(G)$ into $AP(H)$ that respects unitary
representations and $R(1)=1$.

Now, observe that $\|f\|=|f(x)|$, for $x\in G$, if and only if there is a scalar
$\lambda$ with $|\lambda|=\|f\|$ such that $(\lambda - f)(x)=0$.
Since $T$ is non-vanishing,
the latter is equivalent to $(\lambda - Tf)(y)=0$ for $y\in H$ and, therefore,
$\lambda = \|Tf\|=|(Tf)(y)|$.
\end{proof}
Next theorem permits to extend to $LC$ groups the results proved in Section 2 for
compact groups.
\begin{theor}
\label{th32}Let $G$ and $H$ be $MAPLC$ groups such that $H$ is
$\sigma$-compact and $G$ is either $\sigma$-compact or $G$
respects compactness and $G^b$ is realcompact. If $T$ is a
non-vanishing linear isometry of $AP(G)$ into $AP(H)$
that respects (finite dimensional)
unitary representations. Then there is a closed subgroup $H_{0}\subseteq H$, a
continuous group homomorphism $t$ of $H_{0}$ onto $G$ and an element
$\gamma \in \widehat{H}$ such that
\begin{equation*}
(Tf)(h)=\gamma (h)~f(t(h))\text{ for all }h\in H_{0}\text{ and for all }f\in
AP(G)\text{.}
\end{equation*}
\end{theor}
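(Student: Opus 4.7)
The plan is to reduce to Theorem~\ref{th21} by passing to the Bohr compactifications and then to descend the conclusion to $G$ and $H$ using the non-vanishing hypothesis together with the $\sigma$-compactness (or realcompactness) assumptions. Since $AP(G)$ is isometrically isomorphic as a Banach algebra to $C(bG)$, and likewise for $H$, the isometry $T$ induces an isometry $T^{b}:C(bG)\to C(bH)$. Because every finite-dimensional continuous unitary representation of $G$ extends uniquely to a continuous representation of $bG$ (under the standard identification of representation spaces noted in the excerpt), the hypothesis that $T$ respects finite-dimensional unitary representations transfers to $T^{b}$. Applying Theorem~\ref{th21} to the compact groups $bG$ and $bH$ therefore yields a closed subgroup $K\subseteq bH$, a continuous group homomorphism $\tau:K\to bG$ onto $bG$, and a character $\gamma\in\widehat{bH}$ such that $(T^{b}F)(k)=\gamma(k)\,F(\tau(k))$ for every $k\in K$ and $F\in C(bG)$.

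I then set $H_{0}:=b_{H}^{-1}(K)$, let $t$ be $\tau\circ b_{H}$ restricted to $H_{0}$, and note that $\gamma\circ b_{H}|_{H_{0}}\in\widehat{H}$. Three things must be verified: $t$ takes values in $G$; $t$ is surjective onto $G$; and $t$ is continuous for the original topologies. The crux is the first of these, and I argue by contradiction. If $h\in H_{0}$ and $y:=\tau(b_{H}(h))\in bG\setminus b_{G}(G)$, I aim to produce $\phi\in C(bG)$ with $\phi(y)=0$ but $\phi\circ b_{G}$ nowhere zero on $G$; then $T(\phi\circ b_{G})$ vanishes at $h\in H$, contradicting the non-vanishing hypothesis on $T$. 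When $G=\bigcup_{n}K_{n}$ is $\sigma$-compact, each $b_{G}(K_{n})$ is compact and hence closed in $bG$, and $y$ avoids it; by Urysohn in the normal space $bG$ I select $\phi_{n}\in C(bG)$ with $0\le\phi_{n}\le 1$, $\phi_{n}\equiv 1$ on $b_{G}(K_{n})$ and $\phi_{n}(y)=0$, and take $\phi=\sum_{n}2^{-n}\phi_{n}$. In the alternative case, where $G$ respects compactness and $G^{b}$ is realcompact, the separating $\phi$ is furnished by the characterization of realcompactness: every point of $bG\setminus G^{b}$ is separated from $G^{b}$ by the zero set of a continuous real-valued function on $bG$ that is strictly positive on $G^{b}$.

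For surjectivity, given $g\in G$ I pick a peaking function $f\in AP(G)$ whose norm is attained only at $g$; Lemma~\ref{le31} produces $h\in H$ with $|(Tf)(h)|=\|Tf\|=\|f\|$, and the explicit formula $(Tf)(h)=\gamma(b_{H}(h))\,f(t(h))$ combined with the already-established inclusion $t(H_{0})\subseteq G$, together with the description of $K$ as the union of the fibers $I_{p}$ in the proof of Theorem~\ref{th21}, forces $h\in H_{0}$ and $t(h)=g$. For continuity in the original topologies, $\tau\circ b_{H}|_{H_{0}}$ is a priori continuous only as a map from $H_{0}$ with its Bohr topology into $G^{b}$; the $\sigma$-compactness of $H$ together with the hypothesis on $G$ upgrades this to continuity for the original topologies, by the standard fact that under these assumptions the compact subsets of a MAPLC group and those of its Bohr topology coincide. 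I expect the main obstacle to lie in the first step, namely the separating-function construction, and especially under the realcompactness hypothesis, where one must push the abstract realcompactification criterion through the specific compactification $bG$ of $G^{b}$.
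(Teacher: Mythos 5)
Your overall strategy---lift $T$ to an isometry $C(bG)\to C(bH)$, apply Theorem \ref{th21}, and then use the non-vanishing hypothesis to force $t(H_0)\subseteq G$---is the same as the paper's, and your direct construction of the separating function $\phi=\sum_n 2^{-n}\phi_n$ in the $\sigma$-compact case is a clean elementary substitute for the paper's appeal to realcompactness there. However, there are genuine gaps. The most serious one is surjectivity. A function $f\in AP(G)$ whose norm is attained \emph{only} at $g$ need not exist: such an $f$ extends to $F\in C(bG)$, and $\{p\in bG:|F(p)|=\|F\|\}$ is a closed $G_\delta$ of $bG$, so your peaking function exists only when $b_G(g)$ is a $G_\delta$ point of $bG$, which fails whenever $bG$ is non-metrizable (already for $G=\mathbb{Z}$). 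Moreover, even granting such an $f$, Lemma \ref{le31} only produces $h\in H$ lying in $L(f)$ for that single $f$; Holszty\'{n}ski's construction places $h$ in the fiber $I_g$ (hence in $K$, where the representation formula is valid) only if $h\in L(f')$ for \emph{every} $f'\in C_g$, and $L(f)$ need not be contained in $K$, so you cannot legitimately invoke $(Tf)(h)=\gamma(b_H(h))f(t(h))$ at your $h$. The paper's fix is precisely the step you skipped: prove that the family $\{L(f)\cap H: f\in C_g\}$ of Bohr-closed subsets of $H$ has the countable intersection property (by forming $\sum_n 2^{-n}\overline{f_n(g)}f_n$ and applying Lemma \ref{le31} again), and then use that $H^b$ is Lindel\"{o}f---this is where the $\sigma$-compactness of $H$ actually does its work.

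Two further points. In the realcompact alternative you correctly flag the obstacle but do not resolve it: the criterion ``each $p\in\beta X\setminus X$ is separated from $X$ by a zero set'' characterizes realcompactness relative to the Stone--\v{C}ech compactification, not relative to $bG$; to transfer it to $bG$ one needs that $G^b$ is $z$-embedded in $bG$, which is the Ross--Stromberg/Blair/\v{S}\v{c}epin input the paper cites and without which the claimed $\phi\in C(bG)$ need not exist. Finally, for continuity in the original topologies, ``compact subsets of $G$ and of $G^b$ coincide'' is exactly the hypothesis that $G$ respects compactness, which is available only in the second alternative; when $G$ is merely $\sigma$-compact you need a different argument---the paper covers $G$ by translates $Va_n$ of a compact neighbourhood, observes that each $t^{-1}(Va_n)$ is closed in $H_0$ by Bohr continuity, and applies Baire category in the locally compact group $H_0$ to get continuity of $t$ at the identity.
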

\begin{proof}
Firstly, we shall prove that there is a closed subgroup $H_0\subset H$ and a
group homomorphism $t$ of $H_{0}\subset H^{b}$ onto $G^b$ which is (Bohr) continuous.
Since $AP(G)$ and $AP(H)$ can be identified with $C(bG)$ and $C(bH)$, respectively,
we can apply Theorem \ref{th21} to obtain
a closed subgroup $L_{0}$ of
$bH$, a continuous group homomorphism $t$ of $L_{0}$ onto $bG$ and an element
$\gamma \in \widehat{H}$ such that
$(Tf)(q)=\gamma(q)~f(t(q))$ for all $q\in L_{0}$
and for all $f\in C(bG)$. Moreover, according to (Holszty\'{n}ski) Theorem \ref{th01},
the homomorphism $t$ is defined as follows: for every
point $p\in bG$ define $C_{p}=\{f\in C(bG):\parallel~f~\parallel =~|f(p)|\}$ and
for $f$ any element of $C(bG)$ set $L(f)=\{q\in bH:||T(f)||=~|T(f)(q)|\}$. If
$I_{p}=\cap \{L(f):f\in C_{p}\}$ then $I_{p}$ is a non empty closed subset of
$bH$ and $t(q)=p$ for all $q\in I_{p}$ (cf. \cite{hols}).
Defining $H_0$ to be $H\cap L_0$, in order to prove that $t_{|H_0}$ is an homomorphism
of $H_0$ onto $G$, it is enough to show that
$I_{x}\cap H\neq \emptyset $ for all $x\in G$ and that
$t(H_{0})\subseteq G$.

Pick any point $x\in G$. Every function $f\in C_x$ 
satisfies that $\|f\|=|f(x)|$ and, by Lemma \ref{le31}, there is $y\in H$
such that $\|Tf\|=|(Tf)(y)|$.
Thus, $L_H(f)=L(f)\cap H\not=\emptyset $ for all $f\in C_x$.
We claim that the collection $\{L_H(f):f\in C_x\}$ has the countable intersection property on $H$.

Indeed, let $\{f_n\}_{n<\omega }$ a sequence contained in $C_x$.
Since $L(f)=L(\lambda f)$ for all $\lambda \in \mathbb{C}$, there
is no loss of generality if we assume that $|f_n(x)|=||f_n||=1$
for all $n<\omega $. Define $f=\sum_{n<\omega
}2^{-n}~\overline{f_n(x)}~f_n$. We have that $1=f(x)\leq ||f||\leq
\sum_{n<\omega }2^{-n}~||f_n||\leq 1$ and, therefore, $f\in C_x$.
Hence, by Lemma \ref{le31}, there is $y\in H$ with
$1=||T(f)||=|T(f)(y)|= |\sum_{n< \omega
}2^{-n}~\overline{f_n(x)}~T(f_n)(y)|$, where the continuity of $T$
has been applied at this point. Since
$|2^{-n}~\overline{f_n(x)}~T(f_n)(y)|\leq 2^{-n}$, it follows that
$|T(f_n)(y)|=1$ for all $n<\omega $ so that $y\in \cap _{n<\omega
}L_G(f_n)$. Thus, $\{L_H(f):f\in C_x\}$ is a collection of closed
subset of $H^b$ with the countable intersection property. Now, the
group $H$ is $\sigma$-compact. Since the topology of $H^b$ is
weaker than the topology of $H$, it follows that $H^b$ is
$\sigma$-compact and, therefore, has the Lindel\"{o}f property.
Thus, $\cap \{L_H(f):f\in C_x\}\neq \emptyset $. This proves that
$I_x\cap H\neq \emptyset $ for all $x\in G$.

Suppose now that $y\in H_0$ but $t(y)\notin G$.
Using the work of Ross and Stromberg (cf. \cite{rost})
and Blair (cf. \cite{blair}) or, equivalently, the results proved independently by
\v{S}\v{c}epin (cf. \cite{scepin1}, \cite{scepin2}), we obtain that $G^b$ is
$z$-embedded in $bG$. Since $G^b$ is 
realcompact, 
it is easy to find a function
$f\in C(bG)$ such that 
$f(t(y))=0$ and $f$ does not vanish on $G$. Hence, $f_{|G}\not=0$ and belongs to $AP(G)$.
On the other hand, $T(f_{|G})(y)=w(y)~f(t(y))=0$.
This is a contradiction implying that $t(H_0)\subseteq G$.

Thus, we have proved that $t$ is a (Bohr) continuous group homomorphism
of $H_{0}\subset H^{b}$ onto $G^b$ satisfying that
\begin{equation*}
(Tf)(h)=\gamma (h)~f(t(h))\text{ for all }h\in H_{0}\text{ and for all }f\in
AP(G)\text{.}
\end{equation*}
We shall now prove the continuity of $t$ with respect to the locally compact topologies
of $H_0$ and $G$.

Suppose that $G$ is $\sigma$-compact and let $U$ be an arbitrary element of \ $\mathcal{N}_{G}(1_{G})$ and take $V\in
\mathcal{N}_{G}(1_{G})$ such that $V$ is compact and $VV^{-1}\subseteq U$.
Since $G$ is $\sigma$-compact, there is a sequence
$\{a_{n}\}_{n<\omega}\subseteq G$ with $G=\cup _{n<\omega }\{Va_{n}\}$. Hence,
$H_{0}=\cup_{n<\omega }t^{-1}(Va_{n})$. On the other hand,since $t$ is Bohr continuous
and $Va_{n}$ is a compact subset of $G^{b}$ for all $n<\omega $, it follows
that $t^{-1}(Va_{n})$ is a Bohr-closed subset of $H_{0}^{b}$ and, as a
consequence, a closed subset of $H_0$ for all $n<\omega $. The fact that $H_0$
is of second category in itself implies that there is some $n<\omega $ such
that $t^{-1}(Va_{n})$ has non-empty interior. Then $t^{-1}(VV^{-1})\subseteq
t^{-1}(U)$ contains $1_{H}$ in its interior, what proves the continuity of
$t $. This completes the proof in this case.

Finally let us suppose that $G^b$ is realcompact and $G$ respects compactness.
Since the Bohr topology of any topological group is
always weaker than the original locally compact topology of that group, it follows that $t$
is continuous on the compact subsets of $H_0$ when this group is
equipped with the original locally compact topology inherited from $H$. Thus, if $C$ is any
compact subset of $H_0$, then $t(C)$ is a Bohr compact subset of $G^{b}$. Now,
by hypothesis, the group $G$ respects compactness, therefore,
it holds that $t(C)$ is also compact in the
original locally compact topology of $G$. Moreover, since $H_0$ 
is a topological $k$-space and we have just seen that the map $t$ of
$H_0$ onto $G$ is continuous on the compact subsets of $H_0$,
we deduce that $t$ is continuous
on $H_0$ when with respect to the locally compact topologies of $H$ and $G$.
\end{proof}
Combining the last result and Corollary \ref{co22}, we get the
following consequences for arbitrary $\sigma$-compact $MAPLC$
groups.
\begin{co}
\label{co33}Let $G$ and $H$ be $\sigma$-compact $MAPLC$ groups and
suppose that $T$ is a non-vanishing linear isometry of $AP(G)$
onto $AP(H)$
that respects (finite dimensional)
unitary representations. Then there exists a topological isomorphism $t$ of $H$ onto $G$
and an element
$\gamma \in \widehat{H}$ such that
\begin{equation*}
(Tf)(h)=\gamma (h)~f(t(h))\text{ for all }h\in H_{0}\text{ and for all }f\in
AP(G)\text{.}
\end{equation*}
\end{co}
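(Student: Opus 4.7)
The plan is to apply Theorem \ref{th32} to produce a surjective continuous group homomorphism, and then use the surjectivity of $T$ to promote that homomorphism to a topological isomorphism. Applying Theorem \ref{th32} directly yields a closed subgroup $H_0\subseteq H$, a continuous surjective group homomorphism $t:H_0\to G$, and a character $\gamma\in\widehat{H}$ such that $(Tf)(h)=\gamma(h)\,f(t(h))$ for every $h\in H_0$ and $f\in AP(G)$. What remains is to upgrade $H_0$ to $H$, verify injectivity of $t$, and establish continuity of $t^{-1}$.

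For the first two points, I would pass to the Bohr compactifications and invoke Corollary \ref{co22}. Under the identifications $AP(G)\cong C(bG)$ and $AP(H)\cong C(bH)$, $T$ becomes an onto linear isometry $C(bG)\to C(bH)$. Since every finite dimensional continuous unitary representation of $G$ (respectively $H$) extends uniquely to $bG$ (respectively $bH$), $T$ also respects unitary representations at this level. Corollary \ref{co22} then supplies a topological isomorphism $\tau:bH\to bG$ with $(Tf)(y)=\gamma(y)\,f(\tau(y))$ for all $y\in bH$. Inspecting the proof of Theorem \ref{th32}, $H_0=H\cap L_0$ where $L_0\subseteq bH$ is the closed subgroup furnished by Theorem \ref{th21}; in the onto setting Corollary \ref{co22} forces $L_0=bH$, so that $H_0=H\cap bH=H$. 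Injectivity of $t$ is then immediate, since $t=\tau|_H$ is a restriction of a bijection.

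At this stage $t:H\to G$ is a continuous bijective group homomorphism between $\sigma$-compact locally compact groups; the open mapping theorem for locally compact groups then forces $t$ to be open, and hence a topological isomorphism. The stated formula is inherited verbatim from Theorem \ref{th32} once one has $H_0=H$. The step I would verify most carefully is the descent from the onto hypothesis on $T:AP(G)\to AP(H)$ to the conclusion $L_0=bH$: one must confirm that $T$ continues to respect unitary representations when viewed between the compact groups $bG$ and $bH$, which is precisely what the extension property of finite dimensional unitary representations across $b$ provides, and which makes Corollary \ref{co22} a clean substitute for the realcompactness and $z$-embedding arguments used in the proof of Theorem \ref{th32}.
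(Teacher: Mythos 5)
Your proof is correct and follows the paper's intended route: the paper offers no separate argument for this corollary beyond the remark that it follows by combining Theorem \ref{th32} with Corollary \ref{co22}, which is exactly what you do. The details you supply --- identifying $H_{0}=H\cap L_{0}$ with $H$ via the onto case of Corollary \ref{co22} applied at the level of the Bohr compactifications, noting that $t$ is injective as the restriction of the bijection $\tau$, and invoking the open mapping theorem for $\sigma$-compact locally compact groups to upgrade the continuous bijective homomorphism $t$ to a topological isomorphism --- are sound and merely make explicit the steps the paper leaves implicit.
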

\begin{co}
\label{co34}If $\tau_{1}$ and $\tau_{2}$ are two $\sigma$-compact
$MAPLC$ topologies on a group $G$ such that
$AP(G,\tau_{1})=AP(G,\tau_{2})$, that is, they have exactly the
same almost periodic functions. Then $\tau_{1}=\tau_{2}$; or,
equivalently, there is at most one $\sigma$-compact maximally
almost periodic locally compact topology compatible with a fixed
Bohr compactification of the group.
\end{co}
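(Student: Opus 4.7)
My plan is to derive the corollary as an almost immediate consequence of Corollary \ref{co33} by using the identity map as the isometry.

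First I would set $T\colon AP(G,\tau_2)\longrightarrow AP(G,\tau_1)$ to be the identity map $Tf=f$, which makes sense because by hypothesis $AP(G,\tau_1)=AP(G,\tau_2)$ as sets of complex-valued functions on $G$. Clearly $T$ is a linear surjective isometry (it is literally the identity on the common underlying normed space) and it is non-vanishing, since whether a function vanishes somewhere does not depend on the topology. The point that requires a short justification is that $T$ respects finite-dimensional unitary representations. For this I would argue that an element $D=(d_{jk})\in Rep_n(G,\tau_i)$ is nothing but an algebraic homomorphism $G\to \mathcal{U}(n)$ whose coordinate functions $d_{jk}$ lie in $AP(G,\tau_i)$; since $AP(G,\tau_1)=AP(G,\tau_2)$ and the homomorphism condition is purely algebraic, the two sets $Rep_n(G,\tau_1)$ and $Rep_n(G,\tau_2)$ coincide, and so $T(D)=D$ is automatically in $Rep_n(G,\tau_1)$ whenever $D\in Rep_n(G,\tau_2)$.

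Next I would apply Corollary \ref{co33} to $T$: since $(G,\tau_1)$ and $(G,\tau_2)$ are both $\sigma$-compact $MAPLC$ groups, there exist a topological isomorphism $t\colon (G,\tau_1)\longrightarrow (G,\tau_2)$ and a character $\gamma \in \widehat{(G,\tau_1)}$ such that
\begin{equation*}
f(h)=(Tf)(h)=\gamma(h)\,f(t(h)) \quad\text{for all } h\in G \text{ and all } f\in AP(G,\tau_2).
\end{equation*}
Specializing to $f\equiv 1$ forces $\gamma\equiv 1$, so $f(h)=f(t(h))$ for every $f\in AP(G)$ and every $h\in G$.

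To finish, I would invoke maximal almost periodicity: $AP(G)$ separates the points of $G$, so the equality $f(h)=f(t(h))$ for every $f$ forces $t(h)=h$, that is, $t=\mathrm{id}_G$. Because $t$ is at the same time a topological isomorphism from $(G,\tau_1)$ to $(G,\tau_2)$, this gives $\tau_1=\tau_2$. The equivalent reformulation in the statement (uniqueness of the $\sigma$-compact $MAPLC$ topology compatible with a fixed Bohr compactification) follows because two topologies on $G$ determine the same Bohr compactification precisely when the algebras $AP(G,\tau_1)$ and $AP(G,\tau_2)$, viewed as subalgebras of $\mathbb{C}^G$, coincide. I do not expect any genuine obstacle here; the only delicate point, as noted above, is to verify that the identity isometry respects unitary representations, which reduces to checking that the continuity of matrix coefficients is encoded in their membership in $AP(G)$.
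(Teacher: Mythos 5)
Your argument is correct, but it is routed differently from the paper's. The paper does not pass through Corollary \ref{co33} at all: it simply observes that $AP(G,\tau_1)=AP(G,\tau_2)$ forces the two Bohr topologies $\tau_1^b$ and $\tau_2^b$ to coincide, so the identity map of $(G,\tau_1^b)$ onto $(G,\tau_2^b)$ is already known to be a topological group isomorphism, and then it applies only the final step of the proof of Theorem \ref{th32} (the Baire-category argument that upgrades Bohr continuity of a homomorphism between $\sigma$-compact $MAPLC$ groups to continuity in the original locally compact topologies). You instead feed the identity isometry into the full machinery of Corollary \ref{co33} --- which rests on Holszty\'{n}ski's theorem and the Tannaka--Kre\u{\i}n argument of Theorem \ref{th21} --- and then must recover a posteriori that the isomorphism $t$ it produces is the identity, which you do correctly via $\gamma\equiv 1$ (from $T(1)=1$) and the fact that $AP(G)$ separates points on a maximally almost periodic group. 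Your verification that the identity respects unitary representations, i.e.\ that $Rep_n(G,\tau_1)=Rep_n(G,\tau_2)$ because continuity of an abstract homomorphism into $\mathcal{U}(n)$ is equivalent to its matrix coefficients lying in the common algebra $AP(G)$, is the one genuinely nontrivial point of your route and is sound. The trade-off: your proof is a clean black-box application of an already-stated corollary, while the paper's is shorter and more economical because the candidate isomorphism is visible from the start and only the continuity upgrade is actually needed.
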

\begin{proof}
Clearly, $\tau_{1}$ and $\tau_{2}$ have associated the same Bohr topology, say $\tau_{1}^{b}$ and $\tau_{2}^{b}$. Hence the identity map $t$ of $(G,\tau_{1}^{b})$ onto $(G,\tau_{2}^{b})$ is a topological group isomorphism. Then it is proved as at the end of Theorem \ref{th32} that $t$ is also a topological isomorphism of $(G,\tau_{1})$ onto $(G,\tau_{2})$.
\end{proof}
\begin{co}
\label{co35}If $\tau_{1}$ and $\tau_{2}$ are two maximally almost
periodic locally compact topologies on a group $G$ such that both
respect compactness and $AP(G,\tau_{1})=AP(G,\tau_{2})$. Then
$\tau_{1}=\tau_{2}$.
\end{co}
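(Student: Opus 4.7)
The plan is to imitate Corollary \ref{co34}, substituting the respect-compactness portion of the proof of Theorem \ref{th32} for the $\sigma$-compact portion. The opening move is purely functorial: the Bohr topology $\tau_i^b$ is the initial topology on $G$ induced by the functions in $AP(G,\tau_i)$, so the hypothesis $AP(G,\tau_1)=AP(G,\tau_2)$ immediately forces $\tau_1^b=\tau_2^b$. Hence the identity map $t:(G,\tau_1^b)\to (G,\tau_2^b)$ is already a topological group isomorphism, and the task is merely to upgrade its continuity to the locally compact topologies.

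For the upgrade I would recycle the final paragraph of the proof of Theorem \ref{th32}, taking $t$ to be the identity. Let $C\subseteq G$ be $\tau_1$-compact. Since $\tau_i^b\subseteq\tau_i$, $C$ is $\tau_1^b$-compact, hence $\tau_2^b$-compact; because $\tau_2$ respects compactness, $C$ is $\tau_2$-compact as well. Each of $(C,\tau_1|_C)$ and $(C,\tau_2|_C)$ is compact Hausdorff, and since both topologies are MAP the coarser topologies $\tau_i^b|_C$ are Hausdorff too, so the continuous bijections $(C,\tau_i|_C)\to (C,\tau_i^b|_C)$ are homeomorphisms. Chaining $\tau_1|_C=\tau_1^b|_C=\tau_2^b|_C=\tau_2|_C$, the identity is $\tau_1\to\tau_2$ continuous on $C$. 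Since $(G,\tau_1)$ is locally compact and hence a $k$-space, the identity is globally continuous from $\tau_1$ to $\tau_2$; by symmetry (both topologies respect compactness), the identity is also continuous in the reverse direction, and $\tau_1=\tau_2$.

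The only subtle point I anticipate is justifying why Theorem \ref{th32}'s extra hypotheses --- $\sigma$-compactness of $H$ and realcompactness of $G^b$ --- can be dropped here. Those hypotheses are used only to \emph{construct} the homomorphism $t$ (via the countable-intersection/Lindel\"of argument) and to guarantee $t(H_0)\subseteq G$ (via the $z$-embedding argument); in the present situation $t$ is handed to us as the identity, so neither issue arises and the compactness-comparison step alone suffices to conclude.
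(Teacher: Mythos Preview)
Your proof is correct and follows essentially the same approach as the paper: the paper also observes that the identity map is a Bohr-topological isomorphism and then invokes the respect-compactness paragraph of Theorem~\ref{th32} to upgrade continuity to the locally compact topologies. Your version simply spells out that paragraph in full detail (including the compact--Hausdorff comparison $\tau_1|_C=\tau_1^b|_C=\tau_2^b|_C=\tau_2|_C$) and correctly notes that the $\sigma$-compactness and realcompactness hypotheses of Theorem~\ref{th32} are irrelevant here because $t$ is handed to you as the identity.
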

\begin{proof}
Consider the identity linear operator $T:AP(G,\tau_{1})\longrightarrow AP(G,\tau_{2})$.
The mapping $t:(G,\tau_{2}^{b})\longrightarrow (G,\tau_{1}^{b})$
canonically associated to this isometry (the identity mapping)
is a topological isomorphism with respect to the
Bohr topology of both groups. Now, since we are dealing with locally compact groups that
respect compactness, the arguments in Theorem \ref{th32} apply to obtain that $t$ is a
topological isomorphism with respect to the original topologies. Thus $\tau_{1}=\tau_{2}$.
\end{proof}
\begin{re}
\label{re36}The statement above is not true in general if the requirement
of respecting compactness on the groups is removed.

Indeed, it is known that if $H$ is a compact, connected, simple
Lie group with trivial center, and $h$ is any homomorphism from
$H$ into any compact topological group $K$. Then $h$ is continuous
(cf. \cite{waerden} or \cite[9.16]{co1}.) Thus the group $H$
posses two different group topologies: the discrete topology and
the original one which makes $H$ a compact Lie group. However,
both topologies share exactly the same family of almost periodic
functions: notice that $H$ equipped with the original compact
topology is the Bohr compactification of the group $H$ equipped
with the discrete topology. This sort of examples cannot be
expected for Abelian groups. In fact Varopoulos proved (cf.
\cite{varopou}) that for any Abelian group $G$ there is at most
one locally compact group topology compatible with a fixed Bohr
compactification of the group. Corollary \ref{co34} extends
Varapoulos' result to $\sigma$-compact $MAPLC$ groups.
\end{re}
Furthermore, since $LC$ Abelian groups respect compactness (cf. \cite{glick})
and are always realcompact in their Bohr topology (cf. \cite{cht1}),
we can extend Theorem \ref{th23} and Corollary \ref{co24}
to $LC$ Abelian groups following the pattern of Theorem \ref{th32}.
\begin{theor}
\label{th37}Let $G$ and $H$ be $LCA$ groups and $H$ is connected. Suppose
that $T$ is a non-vanishing linear isometry of $AP(G)$ into $AP(H)$ that preserves
trigonometric polynomials.
Then there is a closed subgroup $H_{0}\subseteq H$, a continuous group homomorphism
$t$ of $H_{0}$ onto $G$,
an element $h_{0}\in H_{0}$, a character $\alpha \in \widehat{H}$ and an
unimodular complex number $a$ such that
\begin{equation*}
(Tf)(h)=a\cdot \alpha (h)~\cdot f(t(h-h_{0}))\text{ for all }h\in H_{0}\text{
and for all }f\in AP(G)\text{.}
\end{equation*}
Moreover, if $T$ is an onto isometry then $H_{0}=H$ and, as a consequence, $G
$ and $H$ are topologically isomorphic.
\end{theor}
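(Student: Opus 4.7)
The plan is to mimic the derivation of Theorem \ref{th32} from Theorem \ref{th21}, but with Theorem \ref{th23} playing the role of Theorem \ref{th21}. Two facts about $LCA$ groups, quoted in the paragraph preceding the theorem, are the essential input for the descent: every $LCA$ group respects compactness (\cite{glick}) and its Bohr modification $G^b$ is realcompact (\cite{cht1}). I will also use that, because $H$ is connected and $b(H)$ is dense in $bH$, $bH$ is connected, and that any connected $LC$ group is $\sigma$-compact, so that $H^b$ is Lindel\"of.

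I would first identify $AP(G)$ with $C(bG)$, $AP(H)$ with $C(bH)$, and $\widehat{H}$ with $\widehat{bH}$, so that $T$ becomes a linear isometry $T\colon C(bG)\to C(bH)$ preserving trigonometric polynomials. Running the argument of Theorem \ref{th23} at the Bohr level then produces a unimodular scalar $a$ and a character $\alpha\in\widehat{H}$ with $T(1)=a\alpha$, a closed Holszty\'nski set $Y\subseteq bH$ together with a continuous onto map $r\colon Y\to bG$ describing the normalised operator $R=(T(1))^{-1}T$ by $(Rf)(q)=f(r(q))$, and, for any preimage $h_0$ of $1_{bG}$ under $r$, a closed subgroup $L_0\subseteq bH$ containing $h_0$ and a continuous group homomorphism $\tau\colon L_0\to bG$ onto $bG$ such that
\[
(Tf)(q)=a\cdot\alpha(q)\cdot f(\tau(q-h_0))\qquad (q\in L_0,\ f\in C(bG)).
\]
The non-vanishing hypothesis enters here: by Lemma \ref{le31}, and exactly as in the proof of Theorem \ref{th32}, every Holszty\'nski fibre $I_x$ meets $H$ (via the Lindel\"of property of $H^b$). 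Choosing $h_0\in I_{1_G}\cap H$ places $h_0$ in $H_0:=L_0\cap H$.

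It remains to check that $t:=\tau|_{H_0}$ is a continuous group homomorphism of $H_0$ onto $G$ in the $LC$ topologies. The same fibre argument applied at every $x\in G$ gives $I_x\cap H\ne\emptyset$ and hence $\tau(H_0)\supseteq G$; the inclusion $\tau(H_0)\subseteq G$ follows, exactly as in Theorem \ref{th32}, from the realcompactness of $G^b$ and the $z$-embedding of $G^b$ into $bG$, for if $\tau(y)\notin G$ for some $y\in H_0$ one can produce $f\in C(bG)$ non-vanishing on $G$ with $f(\tau(y))=0$, contradicting the non-vanishing of $T$. Continuity of $t$ in the $LC$ topologies is then obtained from $G$ respecting compactness and $H_0$ being a $k$-space, by the argument closing the proof of Theorem \ref{th32}. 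In the onto case, Theorem \ref{th23} yields $L_0=bH$, hence $H_0=H$, and the resulting bijective continuous group homomorphism $t\colon H\to G$ is a topological isomorphism by the open mapping theorem for $\sigma$-compact $LC$ groups. The main technical obstacle throughout is the selection of $h_0$ inside $H$: the Bohr-compactification argument only places $h_0$ in $bH$, and it is the non-vanishing hypothesis, mediated by Lemma \ref{le31} and the $\sigma$-compactness of $H$, that guarantees $I_{1_G}\cap H\ne\emptyset$ and thereby permits the entire descent from $bH,bG$ to $H,G$.
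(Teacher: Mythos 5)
Your proposal is correct and follows exactly the route the paper intends: Theorem \ref{th37} is stated there without a written proof, the author only remarking that it is obtained ``following the pattern of Theorem \ref{th32}'' from Theorem \ref{th23}, using that $LCA$ groups respect compactness and are realcompact in the Bohr topology. You supply precisely the details that pattern requires --- connectedness of $bH$, $\sigma$-compactness (hence the Lindel\"of property of $H^b$) coming from connectedness of $H$, the use of Lemma \ref{le31} to place the base point $h_0$ and the fibres $I_x$ inside $H$, and the realcompactness/$k$-space arguments for the descent and for continuity --- so no further comment is needed.
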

\begin{co}
\label{co38}Let $\rho :H\longrightarrow G$ where $G$ and $H$ are $LCA$
groups and $H$ is connected. Then $\rho $ preserves trigonometric
polynomials iff $\rho =t+\theta $ where $t$ is a continuous group homomorphism
and $\theta $ is a constant map.
\end{co}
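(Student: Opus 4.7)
\emph{Necessity} is straightforward: if $\rho = t + \theta$ with $t$ a continuous homomorphism and $\theta \in G$ the constant value, then for any character $\chi \in \widehat{G}$ one computes $\chi \circ \rho = \chi(\theta) \cdot (\chi \circ t)$, and since $\chi \circ t \in \widehat{H}$ this extends by linearity to $\phi \circ \rho \in \mathcal{T}(H)$ for every $\phi \in \mathcal{T}(G)$.

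For \emph{sufficiency}, the plan is to mirror the proof of Corollary~\ref{co24}, substituting Theorem~\ref{th37} for its compact counterpart. Define $T \colon AP(G) \to AP(H)$ by $T(f)(h) = f(\rho(h))$. First I would verify $T$ is well defined: since trigonometric polynomials are uniformly dense in $AP(G) \cong C(bG)$, any $f \in AP(G)$ is a uniform limit of $\phi_n \in \mathcal{T}(G)$, whence $f \circ \rho$ is a uniform limit of the functions $\phi_n \circ \rho \in \mathcal{T}(H) \subseteq AP(H)$ and therefore lies in $AP(H)$. The map $T$ is then linear, multiplicative, unital, preserves trigonometric polynomials, and is non-vanishing. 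Theorem~\ref{th37} now furnishes a closed subgroup $H_0 \subseteq H$, a continuous group homomorphism $t \colon H_0 \to G$, an element $h_0 \in H_0$, a character $\alpha \in \widehat{H}$, and a unimodular constant $a$ such that
\[
(Tf)(h) \;=\; a \cdot \alpha(h) \cdot f(t(h-h_0)) \qquad (h \in H_0,\ f \in AP(G)).
\]
Testing at the constant function $f \equiv 1$ forces $a \cdot \alpha(h) \equiv 1$ on $H_0$, so $a = 1$ and $\alpha$ is trivial on $H_0$; and since $T(f) = f \circ \rho$ is globally defined on $H$, the subgroup $H_0$ must equal $H$. The formula then reduces to $\rho(h) = t(h-h_0) = t(h) - t(h_0)$, i.e.\ $\rho = t + \theta$ with the constant $\theta := -t(h_0)$, as required.

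The step I expect to be most delicate is verifying the isometry and non-vanishing hypotheses of Theorem~\ref{th37} for the pullback $T$, because $\|f \circ \rho\|_H = \|f\|_G$ a priori only holds when $\rho(H)$ is Bohr-dense in $G$, which is not explicitly built into the hypothesis. A clean workaround, if needed, is to bypass $T$ and argue directly through Bohr compactifications: the assumption that $\rho$ preserves $\mathcal{T}(G)$ forces $\rho$ to be Bohr continuous (each $\chi \circ \rho$ is a trigonometric polynomial on $H$ and characters separate points on $bG$), so $\rho$ extends to a continuous map $\bar\rho \colon bH \to bG$ between compact abelian groups. Since $H$ is connected, $bH$ is connected as well, and Corollary~\ref{co24} applied to $\bar\rho$ gives $\bar\rho = \bar t + \bar\theta$ with $\bar t$ a continuous homomorphism. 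Evaluation at the identity shows $\bar\theta = \rho(0) \in G$ and $\bar t(H) = \rho(H) - \rho(0) \subseteq G$; finally, continuity of $t := \bar t|_H$ in the original $LCA$ topologies follows, exactly as at the end of Theorem~\ref{th32}, from $H$ being a $k$-space together with the fact that $LCA$ groups respect compactness.
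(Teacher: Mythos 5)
Your primary plan is exactly the proof the paper intends: it proves Corollary \ref{co24} by pulling back along $\rho$ to get $T(f)=f\circ\rho$, applying the structure theorem, and killing $a$ and $\alpha$ by multiplicativity, and it states that Corollary \ref{co38} follows from Theorem \ref{th37} "following the pattern of Theorem \ref{th32}" with no further details. The delicate point you flag is real and is in fact glossed over in the paper's own proof of Corollary \ref{co24}: unless $\rho(H)$ is (Bohr-)dense in $G$, the pullback $T$ is not an isometry, and it is not non-vanishing either (a function vanishing only off $\rho(H)$ composes to a nowhere-vanishing function), so the hypotheses of Theorem \ref{th37} are not literally available --- note that the statement of the corollary does allow non-surjective $\rho$, e.g.\ constant maps. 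Your workaround is therefore the more robust route and is genuinely different from the paper's: extending to $\bar\rho\colon bH\to bG$ (legitimate, since each $\chi\circ\rho\in\mathcal{T}(H)\subseteq AP(H)$ extends to $bH$ and the characters embed $bG$ into $\mathbb{T}^{\widehat{G}}$, which is closed, so density of $H$ in $bH$ forces $\bar\rho(bH)\subseteq bG$), invoking the compact Corollary \ref{co24}, reading off $\bar t(H)=\rho(H)-\rho(0)\subseteq G$ directly (so no realcompactness is needed), and then recovering continuity in the locally compact topologies from the $k$-space plus respects-compactness argument of Theorem \ref{th32}. What the paper's route buys is uniformity with the isometry machinery of Section 3; what yours buys is that it actually covers the non-surjective case and sidesteps the isometry and non-vanishing verifications entirely. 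Both are acceptable, but you should present the Bohr-compactification argument as the proof rather than as a fallback.
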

\begin{re} \label{re39}
In all results in this section we have assumed the hypothesis that
the isometry $T$ is non-vanishing. It is readily
seen that this assumption may not be removed in general; for example, it is
enough to take $H$ as any non compact $LCA$ group and define $G$ as the Bohr
compactification of $H$. The map $Tf=f_{|H}$ for all $f\in AP(G)$ is a
simple multiplicative linear isometry of $AP(G)$ onto $AP(H)$ while the
groups $G$ and $H$ are not homeomorphic. However, this example has no
special interest since one of the groups is never metrizable. Next we give a
less obvious example due to Kunen \cite{kunen1} to show that in general
multiplicative linear onto
isometries between spaces of almost periodic functions are not always
related to continuous 1-1 mappings between the groups involved, even when
they are metrizable.
\end{re}
\begin{example}
\label{ex310}There exists a pair of discrete Abelian groups $G$ and $H$ and a
multiplicative linear isometry $T$ of $AP(G)$ onto $AP(H)$ that may not be
represented by means of a continuous 1-to-1 mapping of $H$ into $G$.
\end{example}
\begin{proof}
For every prime $p$, let $\mathbb{V}_{p}$ be the vector space over
$\mathbb{Z}_{p}$ of dimension $\aleph _{0}$ where we just consider
$\mathbb{V}_{p}$ as an (additive) Abelian group, and ignore the
vector space structure. It is shown in \cite[(4.2)]{kunen1} that
for distinct primes, $p$ and $q$, the groups $\mathbb{V}_{p}$ and
$\mathbb{V}_{q}$ equipped with their respective Bohr topologies
are not homeomorphic; in fact, there is no 1-1 Bohr-continuous
mapping from $\mathbb{V}_{p}$ into $\mathbb{V}_{q}$. On the other
hand, their respective Bohr compactifications, $b\mathbb{V}_{p}$
and $b\mathbb{V}_{q}$, are homeomorphic (cf. \cite[(9.15)]{hr1}).
Let $t$ be a homeomorphism of $b\mathbb{V}_{q}$ onto
$b\mathbb{V}_{p}$. If we take $G=\mathbb{V}_{p}$,
$H=\mathbb{V}_{q}$ and define $T:AP(G)\longrightarrow AP(H)$ by
$T(f)=f^{b}\circ t$ where $f^{b}$ means the continuous extension
to $bG$ of the mapping $f$ then it is easily verified that $T$ may
not be represented by a 1-1 continuous mapping of $H$ into $G$.
\end{proof}
\begin{re}
We do not know whether there is a variant of our results for
spaces of weakly almost periodic functions. In connection with
this question, observe that each isometry $T$, defined between
spaces of continuous functions, that preserves finite dimensional
unitary representations  sends automatically almost periodic
functions into almost periodic functions. Thus, nothing new may be
obtained along this way unless one replaces the preservation of
finite dimensional representations by a weaker condition.
\end{re}

We wish to thank the referee for his/her constructive report that
has helped us to improve parts of this paper.

\end{document}